\newcommand{\be}{\begin{enumerate}}
\newcommand{\ee}{\end{enumerate}}
\newcommand{\bi}{\begin{itemize}}
\newcommand{\ei}{\end{itemize}}
\newcommand\unnumberedfootnote[1]{ %
        \let\temp=\thefootnote %
        \renewcommand{\thefootnote}{}%
        \footnote{#1}%
        \let\thefootnote=\temp%
        \addtocounter{footnote}{-1}}
\newtheorem{theorem}{Theorem}
\newtheorem{proposition}{Proposition}[section]
\newtheorem{lemma}[proposition]{Lemma}
\newtheorem{definition}[proposition]{Definition}
\theoremstyle{definition}
\newtheorem{remark}[proposition]{Remark}
\newtheorem{step}{Step}
\numberwithin{equation}{section}
\begin{document}


\title{\LARGE The Aldous-Shields model revisited \\[0ex] (with application
  to cellular ageing)
}

\thispagestyle{empty}

\author{{\sc by K. Best\thanks{Freiburg Initiative for Systems Biology
      and Albert-Ludwigs University, Freiburg}
    $\mbox{}^,$\thanks{Fakult\"at f\"ur Mathematik und Physik,
      Eckerstra\ss e 1, D-79104 Freiburg, Germany}$\;$, 
    P. Pfaffelhuber$\mbox{}^{\ast,\dagger,\,}$\thanks{Corresponding
      author; email: p.p@stochastik.uni-freiburg.de}}} \date{}

\maketitle
\unnumberedfootnote{\emph{AMS 2000 subject classification.} 60K35,
  92D20, 60J85, 05C05 .}

\unnumberedfootnote{\emph{Keywords and phrases.} Random tree, cellular
  senescence, telomere, Hayflick limit }

\vspace*{-6ex}

\begin{abstract}
  \noindent
  In Aldous and Shields (1988), a model for a rooted, growing random
  binary tree was presented. For some $c>0$, an external vertex splits
  at rate $c^{-i}$ (and becomes internal) if its distance from the
  root (depth) is $i$. For $c>1$, we reanalyse the tree profile, i.e.\
  the numbers of external vertices in depth $i=1,2,...$. Our main
  result are concrete formulas for the expectation and
  covariance-structure of the profile. In addition, we present the
  application of the model to cellular ageing. Here, we assume that
  nodes in depth $h+1$ are senescent, i.e.\ do not split. We obtain a
  limit result for the proportion of non-senescent vertices for large
  $h$.
\end{abstract}

\section{Introduction}
Trees arise in several applied sciences: In linguistics and biology,
trees describe the relationship of items (languages, species) and in
computer science, trees are used as data structures, e.g.\ for
sorting. Randomizing the input leads to random trees, which are object
of a large body of research. For applications in biology, see e.g.\
\cite{Berest09,Felsenstein2002}. Here, important examples are trees
arising from branching processes (e.g.\ Yule trees). In computer
science, prominent examples are search trees; see e.g.\
\cite{Neininger2004, Drmota2009}.

~

In this note, we are concerned with an application of random trees in
cellular biology. In the 1960s it was known that eukaryotic cells have
a limited replication capacity (\cite{pmid13905658}). The number of
generations until cells do not proliferate any more is today known as
the \emph{Hayflick limit} and the phenomenon that cells loose their
ability to proliferate is called \emph{cellular senescence}. The
molecular basis for cellular senescence were uncovered starting in the
1970s. A theory was developed which argued that during each round of
replication, the \emph{telomeres} (which are the end part of each
chromosome) are shortened due to physical constraints of the DNA
copying mechanism (\cite{pmid9415101}). In humans, these telomeres are
a multiple (i.e.\ more than 1000-fold) repetition of the base pairs
TTAGGG and up to 200 bases are lost in each replication round
(\cite{Levy:1992:J-Mol-Biol:1613801}). Most importantly, telomeres
have a stabilizing effect on the DNA. The \emph{DNA repair mechanism}
of a cell must be able to distinguish between usual DNA breaks (which
it is assumed to repair) and the telomeres (which it is assumed to
ignore). Hence, when telomeres become shorter this stabilizing effect
seizes and ageing occurs.  It can be observed that telomeres shrink
from 15 kilobases at birth to less than 5 kilobases during a lifetime
(\cite{pmid15471900}). However, the enzyme \emph{telomerase} is known
to be able to decrease the loss of telomeres during replication.  This
enzyme has been found to be active in stem cells and cancer cells,
which both are cell types with an (almost) unbounded replication
potential. The deeper understanding of the role of telomeres and
telomerase is an active field of research because of the medical
implications for ageing and cancer. In particular, it was awarded the
Nobel prize in medicine in 2009 (\cite{pmid19815741}).

~

We study the model of random trees introduced in Aldous and Shields in
\cite{AldousShields:1988:ProbTh} (hereafter referred to as [AS]) and
extend it for an application to cellular ageing. Given some $c>0$ and
a full binary tree $\mathbb T$, the model introduced in [AS] describes
the evolution of the vertices of the tree. Here, we distinguish
\emph{internal}, \emph{external} and \emph{prospective} vertices. At
$t=0$, the root is the only external vertex (and there are no internal
vertices). An external vertex $u\in\mathbb T$ in depth $|u|$ becomes
internal at rate $c^{-|u|}$. At the time it becomes internal, the two
daughter vertices in depth $|u|+1$ become external. We present our
result on the profile of the Aldous-Shields model in Theorem
\ref{Th1}.

For our application to cellular senescence, we will analyze a relative
of the Aldous-Shields model for $c>1$. Here, a critical depth $h$ is
fixed, and only external vertices in depth at most $h$ can become
internal. External vertices in depth $h+1$ never become
internal. Here, external vertices can be thought of as cells. The
depth of a vertex is the number of generations from the first
cell. Vertices in depth at most $h$ represent proliferating cells,
because they are able to produce offspring (i.e.\ daughter
cells). Vertices in depth $h+1$ represent senescent cells. This model
has two features, which appear to be realistic in cellular
senescence. First, the rate of cell proliferation decreases with the
generation of a cell, parameterized by $c>1$. Second, cells which have
already split too often loose their ability to proliferate at all. For
this model, we obtain a limit result for the frequency of
proliferating cells in Theorem \ref{T2}.

~

The paper is organized as follows: In Section \ref{sec:model}, we
state our results on the Aldous-Shields model. The application to
cellular senescence is carried out in Section \ref{sec:application},
where we also give an overview of other models for cellular senescence
in the literature. Section \ref{sec:proofs} contains the proofs for
our results on the Aldous-Shields model (Theorem \ref{Th1}), and in
Section \ref{sec::proofs2}, we give proofs for the results on the
model of cellular ageing (Theorem \ref{T2}).

\section{Model and results}\label{sec:model}
We start by introducing some notation. Let $\mathbb T$ be the complete
binary tree, given through
$$ \mathbb T = \bigcup_{n=0}^\infty \mathbb T_n$$
and
$$ \mathbb T_0 = \{\emptyset\}, \qquad \qquad \mathbb T_n = 
\{0,1\}^n\; \text{ for }n=1,2,...$$ We refer to elements in $\mathbb
T$ by \emph{vertices} and identify $u\in\mathbb T_n$ by a word of length
$n$ over the alphabet $\{0,1\}$, whose $i$th letter is $u_i$, $n\geq
1$. The vertex $\emptyset$ is the root of the tree and vertex
$u\in\mathbb T$ has two daughter vertices, $u0$ and $u1$. (We make the
convention that $\emptyset 0 := 0, \emptyset 1:=1$.) For $u\in\mathbb
T$ we set $|u|=n$ iff $u\in\mathbb T_n$.

We say that $u$ is an ancestor of $v$ if  $|u|<|v|$ and there
are $i_1,...,i_{|v|-|u|}\in\{0,1\}$ with $v= u i_1\cdots i_{|v|-|u|}$.
The ancestor induces a transitive order relation in $\mathbb T$, and we
write $u\prec v$ iff $u$ is ancestor of $v$.

\begin{definition}[Aldous-Shields model]
  Fix $c>0$. The (time-continuous) \emph{Aldous-Shields model} with
  parameter $c$ is a Markov jump process $\mathcal Y = (Y(t))_{t\geq
    0}$, $Y(t) = (Y_u(t))_{u\in\mathbb T}$ with state space
  $\{0,1\}^{\mathbb T}$, starting in $Y(0) =
  (\mathbbm{1}_{u=\emptyset})_{u\in\mathbb T}$. Given $Y(t) =
  y\in\{0,1\}^{\mathbb T}$ and $u\in\mathbb T$ with $y_u=1$, it jumps
  to $(\widetilde y_{v})_{v\in\mathbb T}$, given by
  $$ \widetilde y_v = \begin{cases} 0, & v=u,\\ 1, & v=u0 \text{ or } v=u1,\\ y_v, & \text{else,}\end{cases}$$
  at rate $c^{-|u|}$. In this case, we say that vertex $u$ splits.
\end{definition}

\begin{remark}[Internal and external vertices]
  Let $\mathcal Y = (Y(t))_{t\geq 0}$ be the Aldous-Shields model and
  $Y=Y(t)$ for some $t\geq 0$. It is important to note that the
  dynamics is such that any path $\emptyset, i_1, i_1
  i_2,...\in\mathbb T$ with $i_1, i_2,...\in\{0,1\}$, starting at the
  root, has exactly one element $u$ with $Y_u=1$. In particular, the
  sets
  $$ \{u: \exists v: u\prec v, Y_v=1\}, \qquad \{u: Y_u=1\}, \qquad \{u: \exists v: v\prec u, Y_v=1\}$$
  of internal, external and prospective vertices are disjunct.
\end{remark}

\begin{definition}[Profile]
  Let $\mathcal Y = (Y(t))_{t\geq 0}$ be the Aldous-Shields model and
  $Y=Y(t)$ for some $t\geq 0$. We define
  \begin{align}\label{eq:Xn}
    X_n := \sum_{u\in\mathbb T_n} Y_u, \qquad \widehat X_n := 2^{-n} X_n,    
  \end{align}
  the \emph{total number of external vertices} and the \emph{relative
    proportion of external vertices} in depth $n$, respectively. The
  vector $(X_n)_{n=0,1,2,...}$ is also called the \emph{profile} and
  \[X = \sum_{n=0}^\infty X_n\] is the \emph{total number of external
    vertices}.
\end{definition}

\begin{remark}[Dependence on $c$]
  The behaviour of the Aldous-Shields model strongly depends on $c$. A
  larger $c$ implies that the profile is more concentrated around
  certain depths. This is because a larger $c$ means that external
  vertices in smaller depth have a higher chance to be the next to
  split.  See Figure \ref{fig1} for an illustration.

  Two values of $c$ are of particular important in applications from
  computer science: for $c=1$, and if $X=n$, the set of external
  vertices is a binary search tree with $n$ external vertices. For
  $c=2$ and $X=n$, the set of external vertices is a digital search
  tree with $n$ external vertices; see e.g.\ \cite{Drmota2009}.
\end{remark}

\begin{figure}
  \begin{center}
    \includegraphics[width=7.1cm]{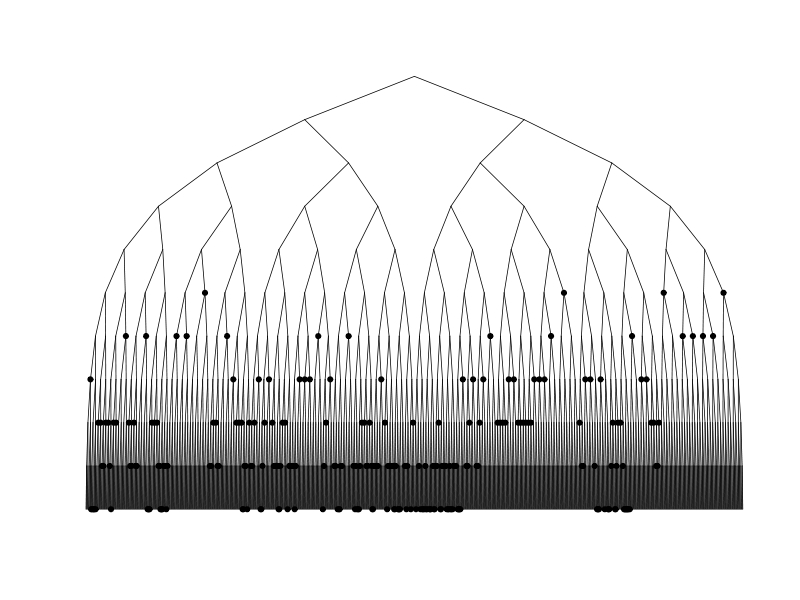}
    \includegraphics[width=7.1cm]{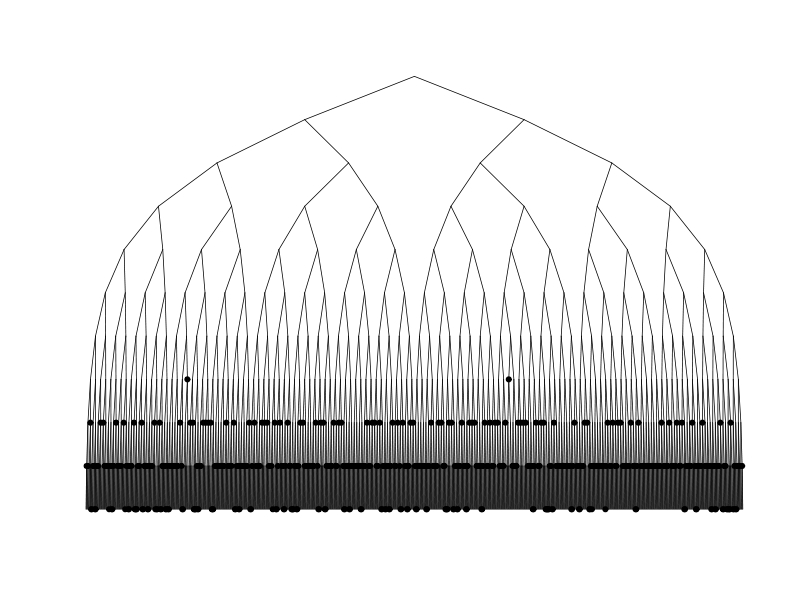}
  \end{center}
  \caption{\label{fig1}Two realizations of the Aldous-Shields model at
    the time when $500$ vertices are external for $c=1.05$ (A) and
    $c=3$ (B).  Only depths $0,...,10$ are drawn and the external
    vertices 
    are marked.}
\end{figure}

\begin{remark}[Relative frequencies]\label{rem:sumXn}
  We observe that 
  \begin{align}\label{eq:sumXn}
    \sum_{n=0}^\infty \widehat X_n(t) = 1
  \end{align}
  for all $t$, almost surely. To see this, note that $\widehat
  X_0(0)=1$ and $\widehat X_n(0)=0$ for $n>0$, i.e.\ \eqref{eq:sumXn}
  holds at $t=0$. Additionally, assume that some $u$ with $|u|=n$
  splits at time $t$. Then, we have that $\widehat X_n(t)-\widehat
  X_n(t-) = -2^{-n}$ and $\widehat X_{n+1}(t)-\widehat X_{n+1}(t-) =
  2\cdot 2^{-(n+1)} = 2^{-n}$. In particular, every split leaves
  $\sum_{n=0}^\infty \widehat X_n$ unchanged which shows
  \eqref{eq:sumXn}.
\end{remark}

\begin{remark}[Notation]
  In our results, we will give asymptotics of moments of
  $X_{n+i}(tc^n)$ for large $n$. Generally, for two sequences
  $(x_n)_{n=1,2,...}$ and $(y_n)_{n=1,2,...}$, which may depend on
  other parameters, we write
  \begin{align*}
    x_n \stackrel{n\to\infty}\approx y_n \qquad & \iff\qquad
    \lim_{n\to\infty} \frac{x_n}{y_n} = 1.
  \end{align*}
\end{remark}

\begin{theorem}[Moments of the profile and their limits]\label{Th1}
  Let $c>1$.  Define for $k\in\mathbb Z_+$
  \begin{align}
    \label{eq:akbk} a_k = (-1)^k \frac{c^k}{(c-1)\cdots(c^k-1)},
    \qquad\quad b_k := \frac{c^1\cdots c^k}{(c-1)\cdots (c^k-1)}
  \end{align}
  and $a_0=b_0=1$ (with the convention that an empty product equals
  1). For negative $k$, we set $a_k=b_k=0$. Moreover,
  $$ b_\infty := \Big(\prod_{l=1}^\infty (1-c^{-l})\Big)^{-1}
  \in (1;\infty).$$ Then, for $t>0$ and $i>-n$,
  \begin{align}\label{eq:Th1a}
    \mathbb E[X_{n+i}(tc^n)] = 2^{n+i} \cdot \sum_{k=0}^n a_k b_{n-k}
    e^{-c^{-i+k}t} \stackrel{n\to\infty}\approx b_\infty \cdot 2^{n+i}
    \cdot \sum_{k=0}^{\infty} a_{k} e^{-c^{-i+k}t}.
  \end{align}
  Moreover, for $i,i'\in\mathbb Z$, $i\leq i'$,
  \begin{equation}
    \label{eq:Th1b}
    \begin{aligned}
      \mathbb{COV}[ & X_{n+i}(tc^n), X_{n+i'}(tc^n)]
      \stackrel{n\to\infty}\approx a_{i,i'}\cdot
      \begin{cases}
        \displaystyle \frac{2}{2-c^2}\Big(\frac{2}{c}\Big)^{2n+i+i'}, & c<\sqrt{2} \\
        2^n n \sqrt{2}^{i+i'}, & c=\sqrt{2}, \\
        \displaystyle \displaystyle \frac{c^4}{2(c^2-1)(c^2-2)}
        2^{n+i'} c^{i-i'}, & c>\sqrt{2}.
    \end{cases}
    \end{aligned}
  \end{equation}
  where 
  \begin{align}\label{eq:aii}
    a_{i,i'} := b_\infty^2 \sum_{k,k'=0}^\infty a_{k} a_{k'}
    e^{-c^{-i+k}-c^{-i'+k'}t} c^{k+k'}.
  \end{align}
\end{theorem}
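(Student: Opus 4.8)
The plan is to set up a system of ODEs for the moments of the profile and solve it explicitly. Write $x_n(t) := \mathbb E[X_n(t)]$. Conditioning on the first split or, more cleanly, using the generator of the Markov jump process on each vertex $u\in\mathbb T_n$ (which carries $Y_u$ from $1$ to $0$ at rate $c^{-n}$ while simultaneously switching on its two daughters), one gets the linear system
\begin{align*}
  \frac{d}{dt} x_n(t) = -c^{-n} x_n(t) + 2 c^{-(n-1)} x_{n-1}(t),
  \qquad x_0(0)=1,\ x_n(0)=0\ (n\ge 1).
\end{align*}
This is a pure-birth-type cascade: $x_n$ depends only on $x_{n-1}$, so one solves it recursively by Duhamel's formula, $x_n(t) = 2c^{-(n-1)}\int_0^t e^{-c^{-n}(t-s)} x_{n-1}(s)\,ds$. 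The solution is a finite linear combination $x_n(t) = 2^n\sum_{k=0}^n c_{n,k} e^{-c^{-k}t}$, and plugging this ansatz into the recursion yields a partial-fraction identity for the coefficients $c_{n,k}$; comparing with \eqref{eq:akbk} one checks $c_{n,k} = a_k b_{n-k}$ by induction on $n$ (the inductive step is the identity $\sum_{k=0}^{n} a_k b_{n-k}/(\text{something})$ telescoping, i.e.\ the standard $q$-binomial partial-fraction expansion of $\prod_{l=1}^n (1-c^{l}z)^{-1}$). Substituting $t\mapsto tc^n$ and $n\mapsto n+i$ gives the exact formula in \eqref{eq:Th1a}; the asymptotic then follows from $b_{n-k}\to b_\infty$ as $n\to\infty$ (dominated convergence in the finite sum, with the tail of $\sum_k a_k e^{-c^{-i+k}t}$ absolutely convergent since $a_k$ decays like $c^{-k^2/2}$ up to constants).

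For the covariance, write $m_{n,n'}(t) := \mathbb E[X_n(t) X_{n'}(t)]$ and derive its ODE from the generator. The product $X_n X_{n'}$ changes when a vertex in depth $n-1$, $n$, $n'-1$ or $n'$ splits; collecting terms (and using $Y_u^2=Y_u$, $Y_uY_v=0$ for distinct $u,v$ at the same split) gives a closed linear system for $(m_{n,n'})$ driven by the already-known $x_n$. Equivalently, and more transparently, I would track $\mathbb V_{n,n'}(t) := \mathbb{COV}[X_n(t),X_{n'}(t)]$ directly: its ODE has the same "transport" part as the mean equations in both indices, plus an inhomogeneous source term coming from the jump in the quadratic variation at a split, which is a quadratic expression in the $x_m$'s (a single split of $u\in\mathbb T_{n-1}$ contributes to $\mathrm{Var}(X_n)$ and, when $n=n'$, to the covariance with the two new daughters, etc.). Solving this inhomogeneous cascade again by Duhamel, the source feeds a double sum over exponentials $e^{-c^{-j}t}e^{-c^{-j'}t}$, which after the rescaling $t\mapsto tc^n$ and reindexing $n\mapsto n+i$, $n'\mapsto n+i'$ produces exactly the factor $a_{i,i'}$ in \eqref{eq:aii}.

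The main obstacle — and the only genuinely delicate point — is extracting the correct leading-order asymptotics of this double sum, because the answer undergoes a phase transition at $c=\sqrt 2$. The size of $\mathbb V_{n+i,n+i'}(tc^n)$ is governed by competition between the combinatorial growth $2^{n}$ of the number of vertices that could have been common ancestors at a given depth and the geometric factor $c^{-2m}$ (roughly) weighting a source emitted at depth $m$; summing $\sum_m 2^{m}(2/c^2)^{m}$-type series, the geometric series converges when $c>\sqrt2$ (dominated by the top depth, giving $2^{n}c^{i-i'}$-scaling), diverges and is dominated by the bottom when $c<\sqrt2$ (giving $(2/c)^{2n}$-scaling), and is marginally divergent producing the extra factor $n$ when $c=\sqrt2$. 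I would handle the three regimes separately: isolate the dominant index range in each case, replace $b_{n-k}\to b_\infty$ uniformly on that range, and verify the omitted contributions are of strictly smaller order; the constants $\tfrac{2}{2-c^2}$, $\tfrac{c^4}{2(c^2-1)(c^2-2)}$ emerge as the values of the corresponding convergent geometric series. The bookkeeping is routine once the ODE system and its Duhamel solution are in place, but keeping track of which index shifts land where in the exponentials $e^{-c^{-i+k}t}$ requires care.
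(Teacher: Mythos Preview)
Your first-moment argument is essentially the paper's: the paper works with the single-vertex expectation $y_n(t)=\mathbb E[Y_{0_n}(t)]$, which satisfies exactly your ODE after dividing out the factor $2^n$, and then diagonalises the resulting bidiagonal matrix explicitly (their key Lemma shows that the lower-triangular matrices $E=(a_{i-j})$ and $F=(b_{i-j})$ are mutual inverses and that $E$ carries the eigenvectors). Your Duhamel/$q$-partial-fraction recursion is an equivalent route to the same closed form, and the passage to the limit via $b_{n-k}\to b_\infty$ is handled identically.

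For the covariance your route is genuinely different. You propose an ODE for $\mathbb{COV}[X_n,X_{n'}]$ with a jump-variation source and a two-index Duhamel solution. The paper instead decomposes probabilistically over the depth $m$ of the most recent common ancestor of two uniformly chosen vertices $U\in\mathbb T_n$, $U'\in\mathbb T_{n'}$: one gets $\mathbb{COV}[X_n,X_{n'}]=2^{n+n'}\sum_m 2^{-((m+1)\wedge n)}\, y_{n,n',m}$ with $y_{n,n',m}=\mathbb{COV}[Y_{0_n},Y_{0_m1_{n'-m}}]$. Conditioning on the split time $T_m$ of the ancestor at depth $m$ renders the two subtrees independent, so $y_{n,n',m}$ collapses to a covariance of two explicit functions of the single random variable $T_m$; a second-order expansion of $\mathbb E[e^{-\lambda T_m}]=\prod_{l=0}^m(1+\lambda c^l)^{-1}$ then yields $y_{n+i,n+i',m}(tc^n)\approx a_{i,i'}\,c^{-2n-i-i'}(c^{2(m+1)}-1)/(c^2-1)$ directly, and the phase transition is read off from the elementary sum $\sum_m 2^{-m}(c^{2m}-1)$. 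This buys a clean factorisation: the constant $a_{i,i'}$ (with its $c^{k+k'}$ weight) drops out of the $T_m$-covariance in one line, and the $m$-sum is decoupled from everything else. In your scheme these two pieces would be entangled inside the 2D Duhamel integral, and isolating $a_{i,i'}$ with the correct weight is exactly the step you have not spelled out. One small correction: the source term in your covariance ODE is \emph{linear} in the first moments $x_m$ (it is $\sum_m c^{-m}\Delta_n(m)\Delta_{n'}(m)\,x_m$, nonzero only for $m\in\{n{-}1,n\}\cap\{n'{-}1,n'\}$), not quadratic.
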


\begin{remark}[Convergence and covariance]
  \begin{enumerate}
  \item It is immediate from the Theorem that
    \begin{align}\label{eq:remcc1}
      \lim_{n\to\infty}\widehat X_{n+i}(tc^n) = b_\infty \cdot
      \sum_{k=0}^{\infty} a_{k} e^{-c^{-i+k}t}
    \end{align}
    in probability, for all $t>0$ and all $c>1$.
  \item The covariances given in \eqref{eq:Th1b} show a phase
    transition at $c=\sqrt 2$. Such a phase transition is already
    known from results by [AS] and \cite{DeanMajumdar2006}. However,
    these papers do not give explicit formulas for the covariance
    structure.
  \item Using \eqref{eq:231}, \eqref{eq:451} and \eqref{eq:sec3} it is
    also possible to obtain exact results for the covariance on the
    left hand side of \eqref{eq:Th1b}.
  \end{enumerate}
\end{remark}

\begin{remark}[Connection to results from Aldous and Shields (1988)]
  In [AS], the evolution of the vector $(\widehat
  X_n(t))_{n=0,1,2,...}$ is studied. In their Theorems, a law of large
  numbers if $X_n(t)$ to some deterministic limit $x_n(t)$ is stated
  and proved using martingale methods. Their result implies that
  \eqref{eq:remcc1} holds even almost surely on compact time
  intervals. In particular, they claim that the limit $x_i(t)$ of
  $\widehat X_{n+i}(tc^n)$ must satisfy $x_i(t) = x_{i+1}(ct)$, which
  clearly holds for the right hand side of \eqref{eq:remcc1}. In
  addition, they show that a suitably rescaled process,
  $2^{n/2}(\widehat X_{n+i}(t)-x_i(t))_{t\geq 0}$, converges as
  $n\to\infty$ weakly to a diffusion for $c>\sqrt 2$. The rescaling
  factor $2^{n/2}$ can also be seen from the above Theorem. Moreover,
  \eqref{eq:Th1b} shows that a convergence of $c^{n}(\widehat
  X_{n+i}(t)-x_i(t))_{t\geq 0}$ to a diffusion can be conjectured for
  $1< c <\sqrt 2$.
\end{remark}

\begin{remark}[Connection to work of Dean and Majumdar (2006)]
  In \cite{DeanMajumdar2006}, the total number of external vertices,
  $X$, was studied in the context of the Aldous-Shields model on an
  $m$-ary tree. In the binary case, a functional equation (their
  equation (2)) for the Laplace transform of $X(t)$ was shown to hold
  true. This equation uses the following fact: Given that $T$ is the
  random time of the first split in the model, it is clear that $T$ is
  mean one exponential and, in addition, $$ X(t) \stackrel d =
  \mathbbm{1}_{T\geq t} + \mathbbm{1}_{T\leq t} \big( X'\big(
  \tfrac{t-T}{c}\big) + X''\big( \tfrac{t-T}{c}\big)\big)$$ where $X'$
  and $X''$ are independent of $T$ and of each other and distributed
  like $X$.  From their identity on Laplace transforms,
  \cite{DeanMajumdar2006} show the phase transition for the variance
  of the number of occupied vertices at $c=\sqrt{2}$, which is also
  seen from Theorem \ref{Th1}.
\end{remark}

\section{Application: cellular ageing}\label{sec:application}
The first mathematical model for cellular senescence was given in
\cite{Levy:1992:J-Mol-Biol:1613801}. It takes several biological facts
into account. When DNA is copied, the double helix is unfolded and
both strands of DNA are copied. Only in one of the two strands there
are physical constraints by which the end of a chromosome cannot be
perfectly copied. This shortening of telomeres is independent for all
chromosomes.  In \cite{Levy:1992:J-Mol-Biol:1613801}, a fixed length
for telomeres which decreases by a fixed amount at each proliferation
event for one of the daughter cells and proliferation occurs along a
full binary tree is assumed. If the length of a telomere of one
chromosome falls below a threshold, a cell cannot replicate any more
and becomes senescent. This threshold takes the Hayflick limit into
account, which states that a cell line can only life for a limited
number of generations before it becomes senescent.

The model by \cite{Levy:1992:J-Mol-Biol:1613801} was extended in
several directions. A stochastic amount of loss of telomeres was
studied in \cite{Antal:2007:J-Theor-Biol:17631317}. In
\cite{Arino:1995:J-Theor-Biol:8551749} and
\cite{Olofsson:1999:Math-Biosci:10209937}, the binary tree of
proliferating cells from \cite{Levy:1992:J-Mol-Biol:1613801} was
replaced by a branching model. In particular,
\cite{Olofsson:1999:Math-Biosci:10209937} took cell death into
account, with different death rates above and below a critical
threshold of telomere length.  Age structure of cells (i.e.\ structure
which phase of the cell cycle) is taken into account by
\cite{Dyson:2007:J-Theor-Biol:17046024,
  Dyson:2002:Math-Biosci:11965249}. Moreover,
\cite{Arkus:2005:J-Theor-Biol:15833310} extend the model of
\cite{Levy:1992:J-Mol-Biol:1613801} by explicitly taking telomerase
activity (which is present in stem cells and cancer cells) into
account.

~

The idea to use the Aldous-Shields model for cellular ageing was
influenced by the following recent results:
\begin{enumerate}
\item In \cite{pmid11081503}, a model is proposed which distinguishes
  two states of telomeres: \emph{capped} and \emph{uncapped}. Only in
  the capped state, proliferation of the cell is still possible. In
  somatic cells, an uncapped telomere cannot be transformed to the
  capped state any more leading to senescent cells; see the model of
  \cite{Proctor:2003:Aging-Cell:12882407}. In stem and tumor cells,
  telomerase is (among other things) responsible for transitions from
  uncapped back to capped telomeres. Following
  \cite{RodriguezBrenes:2010:Proc-Natl-Acad-Sci-U-S-A:20207949}, the
  transition rate of the uncapped to the capped state in stem cell
  decreases with shorter telomeres.
\item In data, it has been observed that proliferating cells can behave
  differently. Motivated by data from \cite{baxter_study_2004,
    bonab_aging_2006, gupta_replicative_2007}, it is argued in
  \cite{Portugal:2008:Biosystems:18063293} that the rate of
  proliferation decreases for shorter telomeres. Their model produces
  a Gompertzian growth model which is known to fit to empirical data
  for somatic and tumor cells.
\end{enumerate}
In stem cells, the decreasing rate for an uncapped telomere to
reeneter the capped state for shorter telomeres from
\cite{RodriguezBrenes:2010:Proc-Natl-Acad-Sci-U-S-A:20207949} shows
exactly the behaviour of the Aldous-Shields model: cells with a long
replicative history proliferate slower. While
\cite{Portugal:2008:Biosystems:18063293} use a linear decrease in
replication rate, depending on telomere length, the Aldous-Shields
model uses a geometric decay of the proliferation rate.

Note that short telomeres can be seen as a form of \emph{damage}. In
\cite{EvansSteinsaltz:2007:ThPopBio}, models for cellular damage were
introduced. In their model, cells inherit damage to the daughter
cells. This model, as well as the Aldous-Shields model are among the
analytically tractable ones.


~

We state our model of cellular ageing:
\begin{definition}
  \label{def:Z}
  Fix $h\in\mathbb N, r>0$ and $c>1$ and let $\mathbb T^h :=
  \bigcup_{n=0}^{h+1} \mathbb T_n$. The process $\mathcal Z =
  (Z(t))_{t\geq 0}$, where $Z(t) = (Z_{u}(t))_{u\in\mathbb T^h}$ is a
  Markov jump process with state space $\{0,1\}^{\mathbb T^{h}}$,
  starting in $Z(0) = (\mathbbm 1_{u=\emptyset})_{u\in\mathbb
    T^h}$. Given $Z(t) = z\in\{0,1\}^{\mathbb T^h}$ and $u\in\mathbb
  T^h\setminus \mathbb T_{h+1}$ with $z_u=1$, it jumps to $(\widetilde
  z_{v})_{v\in\mathbb T}$, given by
  $$ \widetilde z_v = \begin{cases} 0, & v=u,\\ 1, & v=u0 \text{ or } v=u1,\\ 
    z_v, & \text{else,}\end{cases}$$ at rate $rc^{-|u|}$. (Note that
  vertices $u\in\mathbb T_{h+1}$ do not split.)
\end{definition}

Informally, every external vertex $u$ in this process represents a
cell. If $|u|=n$, we say that the cell is in generation $n$. The
process starts with a single mother cell. It proliferates at rate
$r$. All cells up to generation $h$ from the mother cell follow the
usual dynamics of the Aldous-Shields model (with time rescaled by a
factor of $r$), such that cells in generation $n$ proliferate at rate
$r\cdot c^{-n}$. If a cell is in generation $h+1$ from the mother
cell, its telomeres have reached the Hayflick limit and the cell is
not able to proliferate any more.

\begin{definition}[Relative frequency of proliferating cells]
  \label{def:L}
  In applications, the relative frequency of proliferating cells,
  \begin{align}\label{eq:L} 
    L(t) := \frac{Z^p(t)}{Z^p(t) + Z^s(t)}
  \end{align}
  with
  $$ Z^p(t) := \sum_{u\in \mathbb T^h\setminus \mathbb T_{h+1}} Z_{u}(t), \qquad 
  Z^s(t) := \sum_{u\in\mathbb T_{h+1}} Z_{u}(t),$$ is of particular
  importance. Here, $Z^p(t)$ and $Z^s(t)$ is the number of
  proliferating and senescent cells at time $t$, respectively.
\end{definition}

\begin{theorem}[Frequency of replicating cells]
  \label{T2}
  For $\mathcal Z$ and $L$ as in Definition \ref{def:Z} and
  \ref{def:L},
  \begin{align}
    \lim_{h\to\infty} L(tc^h) = \frac{\sum_{i=0}^\infty
      \sum_{k=0}^\infty 2^{-i} a_k e^{-c^{i+k}t/r}}{ \sum_{i=0}^\infty
      \sum_{k=0}^\infty a_k(2^{-i}e^{-c^{i+k}t/r} +
      2e^{-c^{-i-1+k}t/r})} \label{eq:T2}
  \end{align}
  in probability, for all $t>0$.
\end{theorem}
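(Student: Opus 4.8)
The strategy is to relate the process $\mathcal Z$ to the original Aldous-Shields model $\mathcal Y$ and then to apply Theorem \ref{Th1}. First observe that up to the (random, but almost surely finite) time at which the tree first reaches depth $h+1$ — and in fact for all time, on the relevant part of the tree — the restricted process $\mathcal Z$ agrees in law with $\mathcal Y$ run at speed $r$, with the sole modification that vertices in $\mathbb T_{h+1}$ are frozen. Since external vertices in depth $h+1$ never split in $\mathcal Z$, the number $Z_u(t)$ for $|u|\le h$ behaves exactly like $Y_u(rt)$ for the Aldous-Shields model with parameter $c$; only the bottom layer differs. Consequently I would write, for $0\le j\le h$,
\begin{align*}
  Z^p(t) = \sum_{j=0}^{h} \sum_{u\in\mathbb T_j} Z_u(t), \qquad
  Z^s(t) = \sum_{u\in\mathbb T_{h+1}} Z_u(t),
\end{align*}
and express both in terms of the profile $(X_n)$ of $\mathcal Y$. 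The proliferating count over depths $0,\dots,h$ is, in distribution, $\sum_{j=0}^{h} X_j(rt)$; writing $j = h+i$ with $i\le 0$ and substituting $t\mapsto tc^h/r$ puts us in exactly the regime $X_{h+i}(tc^h)$ covered by \eqref{eq:Th1a}. For the senescent count $Z^s$ one must be a little more careful: a vertex $u\in\mathbb T_{h+1}$ is ``on'' at time $t$ iff it has been created and, being frozen, stays on forever; equivalently $Z^s(t)$ counts vertices in depth $h+1$ that would be \emph{external or internal} in the unfrozen Aldous-Shields model, i.e. $Z^s(t) = \sum_{u\in\mathbb T_{h+1}}(\text{$u$ created by time $rt$})$. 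This is a monotone quantity and equals $2\cdot(\text{number of vertices in depth $h$ that have split by time }rt) = 2(2^h - X_h(rt))$ only if no depth-$(h+1)$ vertex ever splits — which is guaranteed here by definition — but one must track that the depth-$h$ splitting dynamics in $\mathcal Z$ is unaffected by the freezing below, which it is. Hence $Z^s(tc^h/r) = 2\bigl(2^h - X_h(tc^h)\bigr)$ in distribution, where on the right $X_h$ refers to $\mathcal Y$ with parameter $c$; note $2^h - X_h(tc^h) = \sum_{i\ge 1} 2^{-1}\cdot(\text{created depth-}(h+1)\text{ mass})$, and more usefully one can also write the created mass in depth $h+1$ directly as $2\sum_{j>h}$(external mass that percolated down) — I will use whichever bookkeeping is cleanest, most likely expressing $2^{h}-X_h$ via the identity $\widehat X_h + \sum_{i\ge 1}\widehat X_{h+i} \to 1$ is \emph{not} available (the tree is truncated), so instead I track $2^{-h}Z^s$ directly through $1 - \widehat X_h(rt) - \sum_{i<0}\cdots$, no: the clean route is $2^{-h}Z^s(tc^h/r) = 2(1-\widehat X_h(tc^h))$.

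Next I would divide numerator and denominator of $L$ by $2^h$ and take $h\to\infty$. By \eqref{eq:Th1a} and the remark \eqref{eq:remcc1}, for each fixed $i$ (with $i\le 0$ for the proliferating sum and via $\widehat X_h$ for the senescent part),
\begin{align*}
  \widehat X_{h+i}(tc^h) \xrightarrow{h\to\infty} b_\infty \sum_{k=0}^{\infty} a_k e^{-c^{-i+k}t}
\end{align*}
in probability, where now $t$ must be replaced by $t/r$ because of the speed-$r$ time change (so $\mathcal Z$ at time $tc^h$ corresponds to $\mathcal Y$ at time $rtc^h$, i.e. the relevant argument is $c^h\cdot(rt)$, giving the limit $b_\infty\sum_k a_k e^{-c^{-i+k}\cdot rt}$ — I will need to chase the exact placement of $r$, which in the statement appears as $t/r$, so presumably the time change is the other way and $\mathcal Z$ runs at rate $r$ means $Z_u(t)\sim Y_u(rt)$ and one substitutes accordingly; I will fix the direction so that the exponents $e^{-c^{i+k}t/r}$ and $e^{-c^{-i-1+k}t/r}$ in \eqref{eq:T2} come out right, which pins down that the proliferating depths contribute the $2^{-i}e^{-c^{i+k}t/r}$ terms, $i\ge 0$ being the codimension $-i$ below $h$, and the senescent layer contributes the $2e^{-c^{-i-1+k}t/r}$ family). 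Then $2^{-h}Z^p(tc^h) = \sum_{i\ge 0}\widehat X_{h-i}(tc^h)\cdot 2^{-i}\cdot 2^{i}$... more precisely $2^{-h}\sum_{j\le h}X_j = \sum_{i\ge 0} 2^{-i}\widehat X_{h-i}$, which converges termwise to $\sum_{i\ge 0} 2^{-i} b_\infty\sum_k a_k e^{-c^{i+k}t/r}$, matching the numerator of \eqref{eq:T2} up to the common factor $b_\infty$; and $2^{-h}Z^s$ converges to $b_\infty\sum_{i\ge 0} 2 e^{-c^{-i-1+k}t/r}a_k$-type terms. Dividing, the $b_\infty$ cancels and \eqref{eq:T2} follows.

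The two technical obstacles are: (i) \emph{interchanging the infinite sum over $i$ with the limit in $h$} — convergence of $\widehat X_{h+i}(tc^h)$ is only asserted termwise, so I need a dominating bound, uniform in $h$, for the tail $\sum_{i\ge M} 2^{-i}\widehat X_{h-i}(tc^h)$ and for the senescent tail; the factor $2^{-i}$ makes the proliferating tail easy (it is bounded by $2^{-M}$ deterministically since $\widehat X\le 1$), but the senescent side and the lower-depth contributions that decay like $e^{-c^{i+k}t/r}$ need either the explicit expectation formula \eqref{eq:Th1a} plus a variance/covariance bound from \eqref{eq:Th1b} to upgrade $L^1$-control to convergence in probability, or a direct monotonicity argument; (ii) \emph{justifying that $\mathcal Z$ restricted to depths $\le h$ really is the speed-$r$ Aldous-Shields model with the bottom layer frozen} — this is essentially immediate from the generators since a split at depth $\le h-1$ has the same rate and effect, and depth-$h$ external vertices split at rate $rc^{-h}$ into depth $h+1$ exactly as in $\mathcal Y$, with the only difference being that in $\mathcal Z$ those depth-$(h+1)$ vertices are inert, which does not feed back upward. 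I would state this as a short coupling lemma. Once these are in place, the convergence in probability in \eqref{eq:T2} follows from Slutsky together with the fact that the limiting denominator is a strictly positive constant (it dominates the $i=0,k=0$ term $1 + 2e^{-t/(cr)} > 0$), so the ratio is continuous at the limit point.
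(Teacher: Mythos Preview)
Your coupling of $\mathcal Z$ to the unfrozen Aldous--Shields model $\mathcal Y$ and your treatment of $Z^p$ are correct and match the paper exactly: one writes $2^{-h}Z^p(tc^h)\stackrel d=\sum_{i=0}^{h}\widehat X_{h-i}(tc^h)$ and passes to the limit termwise via \eqref{eq:remcc1}, with the geometric weights $2^{-i}$ controlling the tail. The technical points you list under (i) and (ii) are the right ones, and the paper handles them in essentially the same way (it simply sets $r=1$ at the outset and invokes Theorem~\ref{Th1}).

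The genuine gap is in your bookkeeping for $Z^s$. The identity you settle on, $2^{-h}Z^s = 2\bigl(1-\widehat X_h\bigr)$, is \emph{false}. A depth-$(h{+}1)$ vertex is created precisely when its depth-$h$ parent has become \emph{internal}; but $2^h - X_h$ counts depth-$h$ vertices that are internal \emph{or prospective}, and at time $tc^h$ the prospective fraction $\sum_{n<h}\widehat X_n(tc^h)$ does not vanish --- it is exactly $2^{-h}Z^p$, a positive limit. So your formula overcounts $Z^s$ by $2Z^p$, which would give the wrong ratio. You actually brush past the correct route when you write ``the created mass in depth $h+1$ directly as $2\sum_{j>h}(\text{external mass that percolated down})$'' and then discard it; that is precisely what the paper does. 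It sets
\[
\widetilde X_{h+1}(t):=\sum_{n=h+1}^\infty 2^{\,h+1-n}X_n(t)
= \#\{u\in\mathbb T_{h+1}:\exists v\succeq u,\ Y_v(t)=1\},
\]
observes that $Z^s\stackrel d=\widetilde X_{h+1}$ under the coupling (the freezing below depth $h$ never feeds back upward, so this identification is exact), and then applies \eqref{eq:remcc1} to each $\widehat X_{h+i}(tc^h)$, $i\geq 1$, to obtain $2^{-h}Z^s(tc^h)\to b_\infty\sum_{i\geq 1}\sum_{k\geq 0}2a_ke^{-c^{-i+k}t}$. Note that this requires the profile of the \emph{unfrozen} model at \emph{all} depths $>h$, not merely $\widehat X_h$; contrary to your worry, the identity $\sum_n\widehat X_n=1$ is perfectly available here because the accounting is done entirely inside $\mathcal Y$, not inside the truncated process.
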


\begin{remark}[Simulations]
  In our model for cellular senescence, Theorem \ref{T2} describes the
  decrease in the frequency of proliferating cells.  This frequency
  has been measured empirically; see e.g.\ \cite[Figure
  5]{Arkus:2005:J-Theor-Biol:15833310} and \cite[Figure
  2]{Arino:1995:J-Theor-Biol:8551749}. As can be seen from the
  Theorem, every $c$ gives a specific curve of decrease; see also
  Figure \ref{fig3}.  These curves can be fit to data in order to
  estimate $c$. As the figure shows, the limiting result of Theorem
  \ref{T2} already gives a good fit for simulations which use $h=20$.
\end{remark}

\begin{figure}
  \begin{center}
    \includegraphics[width=10cm]{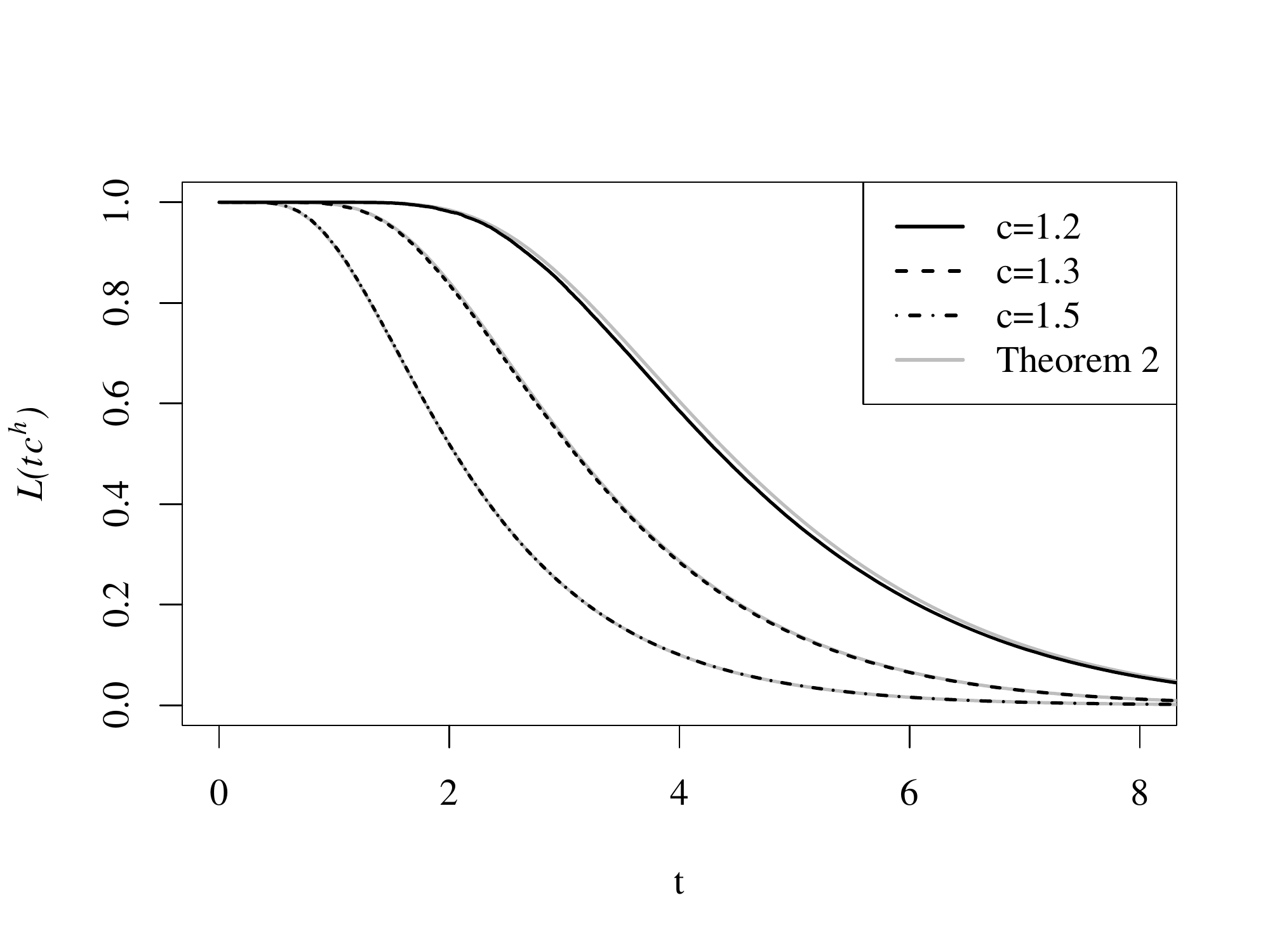}
  \end{center}
  \caption{\label{fig3}The decrease of the frequency of proliferating
    cells strongly depends on $c$. The figure shows simulations for
    $c=1.2, 1.3, 1.5$ and $h=20$. The grey lines show the limit result
    \eqref{eq:T2} from Theorem 2 with $r=1$.}
\end{figure}

\section{Proof of Theorem \ref{Th1}}\label{sec:proofs}

\subsection{Preliminaries}
The key ingredient in the proof is the quantity
\begin{align}\label{eq:zn}
  y_n(t) := \mathbb E[Y_{0_n}(t)] \text{ with }
  0_n:=\underbrace{0\cdots 0}_{n \text{ times}}.
\end{align}
Note that the dynamics of $(Y_{0_n})_{n=0,1,2,...}$ (taking values in
$\{0,1\}^{\{0,1,2,...\}}$ with only one entry being 1) is
autonomous. It is given by the following rule: if $Y_{0_n}(t)=1$,
then, at rate $c^{-n}$ a transition occurs to the configuration
$Y_{0_n}(t)=0, Y_{0_{n+1}}(t)=1$. From the dynamics of the
Aldous-Shields model, it is clear that $\underline y(t) =
(y_n(t))_{n=0,1,2,...}$ follows the differential equations
\begin{align}
  \dot y_n = c^{-(n-1)} y_{n-1} - c^{-n} y_n, \qquad n\geq
  0 \label{eq:AS21}
\end{align}
with $y_{-1}=0$; compare (2.1) in [AS]. We rewrite the equation to
obtain
$$ \dot{\underline y} = A \underline y$$
with
$$ A = \left( \begin{matrix} 
    -1 &          &         &         &         &         & \\ 
    1  & - c^{-1} &         &         &         &         & \\ 
    & c^{-1}   & -c^{-2} &         &         &         &  \\ 
    &          & c^{-2}  & -c^{-3} &         &         &  \\ 
    &          &         & c^{-3}  & -c^{-4} &         &  \\ 
    &          &         &         & c^{-4}  & -c^{-5} &  \\ 
    &          &         &         &         & \cdots  & \cdots \\ 
  \end{matrix}\right)$$

\noindent
Our first Lemma provides essential facts about the matrix $A$. Recall
$a_k$ and $b_k$ from \eqref{eq:akbk}.

\begin{lemma}\label{l:key}
  Let $E=(e_{ij})_{i,j=0,1,2,...}$ and $F=(f_{ij})_{i,j=0,1,2,...}$
  be given by
  $$ e_{ij} = a_{i-j}, \qquad\qquad f_{ij}=b_{i-j}.$$

  The matrix $E$ contains the eigenvectors of $A$ to the eigenvalues
  $\lambda_i = -c^{-i}, i=0,1,2,...$ and $E$ and $F$ are inverse
  to each other.
\end{lemma}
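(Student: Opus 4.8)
The plan is to establish the two assertions separately; everything rests on the two elementary recursions that are immediate from \eqref{eq:akbk}: for every $k\geq 1$,
\[
  (c^k-1)\,a_k = -c\,a_{k-1}, \qquad\qquad (c^k-1)\,b_k = c^k\,b_{k-1}.
\]
I would first remark that although $A$, $E$ and $F$ are infinite matrices, $A$ is banded and $E$, $F$ are lower triangular, so each entry of the products $AE$ and $EF$ occurring below is a \emph{finite} sum and no convergence issue arises there.

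For the eigenvector statement I would compute the $(n,i)$-entry of $AE$ directly. Since the only nonzero entries in row $n$ of $A$ are $A_{n,n}=-c^{-n}$ and $A_{n,n-1}=c^{-(n-1)}$, and $E_{ni}=a_{n-i}$ (with $a_k=0$ for $k<0$), one gets $(AE)_{ni}=c^{-(n-1)}a_{n-1-i}-c^{-n}a_{n-i}$. This equals $\lambda_i E_{ni}=-c^{-i}a_{n-i}$ if and only if $c\,a_{n-1-i}=(c^{\,n-i}-1)\,a_{n-i}$: for $n\leq i$ both sides vanish (using $a_0=1$ when $n=i$), and for $n>i$ this is exactly the first recursion above with $k=n-i$. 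Hence $AE=E\Lambda$ with $\Lambda=\mathrm{diag}(\lambda_0,\lambda_1,\dots)$, which is the eigenvector claim.

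For $E$ and $F$ being inverse, observe that both are lower triangular with $1$'s on the diagonal, hence invertible, and that $(EF)_{ik}=\sum_{j}a_{i-j}b_{j-k}=\sum_{l\geq 0}a_l\,b_{(i-k)-l}$ depends only on $i-k$; by commutativity of the convolution the same is true of $FE$. Thus it suffices to prove
\[
  \sum_{l=0}^m a_l\,b_{m-l} = \mathbbm{1}_{m=0} \qquad\text{for all } m\geq 0.
\]
To this end I would pass to the power series $\mathcal A(x):=\sum_{k\geq 0}a_k x^k$ and $\mathcal B(x):=\sum_{k\geq 0}b_k x^k$, which converge near $x=0$ (indeed $b_k$ is bounded, so $\mathcal B$ has radius of convergence $1$, while $|a_k|$ decays superexponentially, so $\mathcal A$ is entire). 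Comparing coefficients of $x^k$, the two recursions are equivalent to the functional equations $\mathcal A(x)=(1-x)\,\mathcal A(x/c)$ and $(1-x)\,\mathcal B(x)=\mathcal B(x/c)$. Multiplying them, $P(x):=\mathcal A(x)\mathcal B(x)$ satisfies $P(x)=P(x/c)$ for $|x|<1$; iterating and using continuity at $0$, $P(x)=\lim_{n\to\infty}P(x/c^n)=P(0)=a_0b_0=1$. Reading off coefficients gives the displayed identity, hence $EF=FE=I$.

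All the steps are routine; the only point of substance is the convolution identity $\sum_l a_l b_{m-l}=\mathbbm{1}_{m=0}$, which is a form of Euler's $q$-exponential identity. The functional-equation argument above is the quickest self-contained route to it, the only thing to be slightly careful about being the (easy) convergence of $\mathcal A$ and $\mathcal B$ near $0$; alternatively one can run the same computation purely at the level of formal power series and avoid analysis altogether.
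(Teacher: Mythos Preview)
Your proof is correct. The eigenvector computation is essentially the same as the paper's, up to notation; note only a sign slip in your intermediate line --- the equivalence should read $c\,a_{n-1-i}=-(c^{\,n-i}-1)\,a_{n-i}$, which is then literally the recursion $(c^k-1)a_k=-c\,a_{k-1}$ with $k=n-i$.

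Where you genuinely diverge from the paper is in proving the convolution identity $\sum_{l=0}^{m}a_l\,b_{m-l}=\mathbbm{1}_{m=0}$. The paper expands the sum explicitly, rewrites it as a $q$-binomial--type alternating sum
\[
\sum_{k=0}^{n}(-1)^k\frac{c^{\binom{k}{2}}}{(c-1)\cdots(c^k-1)}\,(c^{n-k+1}-1)\cdots(c^{n}-1),
\]
and shows this vanishes for $n\geq 1$ by an induction on $n$. Your route is more structural: you package the two recursions as the functional equations $\mathcal A(x)=(1-x)\mathcal A(x/c)$ and $(1-x)\mathcal B(x)=\mathcal B(x/c)$, whence $P(x)=\mathcal A(x)\mathcal B(x)$ satisfies $P(x)=P(x/c)$ and hence $P\equiv 1$. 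This is shorter and makes transparent \emph{why} the identity holds (the sequences $(a_k)$ and $(b_k)$ are, up to normalization, the coefficients of the two Euler $q$-exponentials, which are reciprocal). The paper's induction, by contrast, is entirely self-contained and elementary, needing no analytic or formal-series machinery. Your remark that the argument can be run purely in formal power series is worth keeping, since the analytic version tacitly uses $c>1$ when iterating $x\mapsto x/c$; the formal version (comparing coefficients in $P(x)=P(x/c)$ gives $p_m(1-c^{-m})=0$) works for any $c$ with $c^k\neq 1$ for all $k\geq 1$.
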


\begin{proof}
  To see that $E$ contains the eigenvectors of $A$, note that for
  $i\geq j$
  $$ (AE)_{ij} = c^{-i+1}a_{i-1-j} - c^{-i} a_{i-j} = (c^{-i+1}\frac{1-c^{i-j}}{c} - 
  c^{-i})\, a_{i-j} = -c^{-j} a_{i-j}.$$ To see that $E$ and $F$ are
  inverse to each other, it follows from the definition of $a_k$ and
  $b_k$ that
  $$(F E)_{ii} = a_0\cdot b_0=1\qquad \text{and}\qquad 
  (F E)_{ij} = 0 \text{ for }i<j.$$ For $i>j$, we set $n:=i-j>0$ and
  obtain
  \begin{align*}
    (F E)_{i, j} & = \sum_{k=j}^i b_{i-k} a_{k-j} = \sum_{k=j}^{n+j}
    b_{j+n-k} a_{k-j}= \sum_{k=0}^{n} b_{n-k} a_{k} \\ & =
    \sum_{k=0}^n (-1)^k \frac{c\cdots c^{n-k}\cdot c^k}{ (c-1)\cdots
      (c^{n-k}-1)\cdot (c-1)\cdots (c^k-1) } \\ & =
    \frac{c^n}{(c-1)\cdots (c^n-1)} \sum_{k=0}^n (-1)^k
    \frac{c^{1+\cdots +(n-k-1)}}{(c-1)\cdots (c^{n-k}-1)}
    (c^{k+1}-1)\cdots (c^n-1) \\ & = \frac{(-c)^n}{(c-1)\cdots
      (c^n-1)} \sum_{k=0}^n (-1)^k\frac{c^{\binom{k}{2}}}{(c-1)\cdots
      (c^k-1)}(c^{n-k+1}-1)\cdots (c^n-1),
  \end{align*}
  where we have reversed the order of the summands in the last equality.
  We rewrite $(c^{l}-1) \cdots (c^{l'}-1) = \prod\limits_{j=l}^{l'}(c^j-1)=1$ 
  for $l>l'$ and  
  claim that for
  all $n>0$
  \begin{align}\label{eq:claim1}
    \sum_{k=0}^n (-1)^k\frac{c^{\binom{k}{2}}}{(c-1)\cdots
      (c^k-1)}(c^{n-k+1}-1)\cdots (c^n-1) = 0
  \end{align}
  which implies that $F$ and $E$ are inverse to each other. We use
  induction and note that the assertion is clear for $n=1$. Given it
  is true for $n$, we have
  \begin{align*}
    \sum_{k=0}^{n+1} & (-1)^k \frac{c^{\binom{k}{2}}}{(c-1)\cdots
      (c^k-1)}(c^{n-k+2}-1)\cdots (c^{n+1}-1)\\ 
      & = 1 +
    \sum_{k=1}^{n+1}(-1)^k \frac{c^{\binom{k}{2}}}{(c-1)\cdots
      (c^k-1)} 
      (c^{n-k+2}-1)\cdots (c^n-1) 
      [(c^{n+1} - c^{n+1-k}) + (c^{n-k+1}-1)]\\ 
      & = \sum_{k=1}^{n+1}
    (-1)^k\frac{c^{\binom{k}{2}}}{(c-1)\cdots (c^k-1)}
    c^{n+1-k}(c^k-1) (c^{n-k+2}-1)\cdots(c^n-1) \\ 
      & = \sum_{k=1}^{n+1}
    (-1)(-1)^{k-1}\frac{c^{\binom{k-1}{2}}c^k}{(c-1)\cdots (c^{k-1}-1)(c^k-1)}
    c^{n+1}c^{-k}(c^k-1) (c^{n-k+2}-1)\cdots(c^n-1) \\ 
      & = -c^{n+1}
    \sum_{k=1}^{n+1} (-1)^{k-1} \frac{c^{\binom{k-1}{2}}}{(c-1)\cdots
      (c^{k-1}-1)} (c^{n-(k-1)+1}-1)\cdots (c^n-1) = 0
  \end{align*}  
  where we have used the induction hypothesis in the second and in the
  last equality. Hence, we have shown \eqref{eq:claim1} and the proof
  is complete.
\end{proof}

\subsection{First order structure; proof of \eqref{eq:Th1a}}
By linearity, we can now explicitly solve \eqref{eq:AS21} using Lemma
\ref{l:key}. Since $E$ contains the eigenvalues of $A$ and $F$ is
inverse to $E$, we immediately write, using $\underline y(t) :=
(y_0(t),y_1(t), y_2(t),...)$ and $D:=\text{diag}(-1,-c^{-1},
-c^{-2},...)$
\begin{align*}
  \big(2^{-n}\mathbb E[X_{n}(t)]\big)_{n=0,1,2,...} & = y(t) = e^{At}
  \underline y(0)^\dagger = E e^{Dt} F \underline y(0)^\dagger =
  Ee^{Dt} (b_0, b_1,...)^\dagger \\ & = \Big(\sum_{k=0}^{n}
  a_{n-k} b_{k} e^{-c^{-k}t} \Big)_{n=0,1,2,...}
\end{align*}
because $z(0)=(1,0,0,...)$ as the process starts with
$(Y(0))=(\mathbbm 1_{u=\emptyset})_{u\in\mathbb T}$.  We have shown the first
part of \eqref{eq:Th1a} and in order to prove the second part, fix
$i,k,t$ and note that $n\mapsto |a_k| b_{n+i-k} e^{-c^{-i+k}t}$ is
increasing with a summable limit. Hence, by dominated convergence,
\begin{align*}
  y_{n+i}(tc^n) & = \sum_{k=0}^{\infty} a_{k}b_{n+i-k}
  e^{-c^{-n-i+k}tc^n} \stackrel{n\to\infty}\approx b_\infty
  \sum_{k=0}^{\infty} a_{k} e^{-c^{-i+k}t}.
\end{align*}

\subsection{Second order structure; proof of \eqref{eq:Th1b}}
\noindent
Now we come to the second order structure. Similar to the definition
of $y_n$ in \eqref{eq:zn}, we set for $n\leq n'$
$$ y_{n,n',m}(t) := \mathbb{COV}[Y_{0_n}(t), Y_{0_m1_{n'-m}}(t)]\text{ with } 0_m1_{n'-m}:= 
\underbrace{0\cdots 0}_{m\text{ times}}\underbrace{1\cdots
  1}_{n'-m\text{ times}}.$$ In order to see the connection of
$y_{n,n',m}(t)$ and $\mathbb{COV}[X_n(t), X_{n'}(t)]$, we define
the depth of the most recent common ancestor of $u, u'$ as
$$M_{u, u'} :=
\sup\{|v|: v\preceq u, v\preceq u'\}, \qquad u, u'\in\mathbb T,$$ 
where $v\preceq u$ if $v\prec u$ or $v=u$. 
Let $U, U'$
be two random variables, where $U$ is uniformly distributed on
$\mathbb T_n$ and $U'$ uniformly distributed on $\mathbb T_{n'}$,
independent of all the rest. The distribution of $M_{U,U'}$ is given
by (recall $n\leq n'$)
$$ \mathbb P[M_{U,U'} = m] = 2^{-((m+1)\wedge n)}, \qquad m=0,...,n.$$
We write
\begin{equation}\label{eq:231}
  \begin{aligned}
    \mathbb{COV} [X_{n} & (t), X_{n'}(t)] = \sum_{u\in\mathbb
      T_n}\sum_{u'\in\mathbb T_{n'}} \big(\mathbb E[Y_{u}(t)Y_{u'}(t)]
    - \mathbb E[Y_{u}(t)]\cdot \mathbb E[Y_{u'}(t)]\big)
    \\
    & = 2^{n+n'} \mathbb{COV}[Y_{U}(t),Y_{U'}(t)] \\ & =
    2^{n+n'} \big( \mathbb E\big[
    \mathbb{COV}[Y_{U}(t),Y_{U'}(t)|M_{U,U'}]\big] +
    \mathbb{COV}\big[\mathbb E[Y_U|M_{U,U'}], \mathbb
    E[Y_{U'}|M_{U,U'}]\big]\big) \\ & = 2^{n+n'} \mathbb E\big[
    \mathbb{COV}[Y_{U}(t),Y_{U'}(t)|M_{U,U'}]\big] \\ & = 2^{n+n'}
    \sum_{m=0}^{n} 2^{-((m+1)\wedge n)} y_{n, n', m}(t).
  \end{aligned}
\end{equation}
The second to last equality holds as $\mathbb{E}[Y_{U}|M_{U,U'}] =
\mathbb E[Y_U]$ and $\mathbb{E}[Y_{U'}|M_{U,U'}]=\mathbb
E[Y_{U'}]$. In order to use the last expression, note that for
$y_{n,n',m}$ the vertices $0_n$ and $0_m1_{n'-m}$ have the vertex
$0_m$ as their most recent common ancestor and so $M_{0_n,
  0_m1_{n'-m}}=m$. We let $T_{m}$ be the last time $Y_{0_m}$ is
external, respectively the time when $Y_{0_{m+1}}$ becomes external,
i.e.\ $T_{m}$ is the sum of exponentials with parameters $1,
c^{-1},c^{-2},...,c^{-m}$. Hence,
\begin{align}\label{eq:lapl1}
  \mathbb E[e^{-\lambda T_{m}}] & = \prod_{l=1}^m
  \frac{c^{-l}}{c^{-l}+\lambda} = \prod_{l=1}^m \frac{1}{1+\lambda
    c^l}.
\end{align}

~

Using the last equations we now prove \eqref{eq:Th1b} in three
steps. First, we give a representation of $y_{n,n',m}$ in terms of a
functional of $T_m$. Second, we derive the asymptotics of
$y_{n+i,n+i',m}(tc^n)$ for large $n$ using this representation. Last,
we plug this asymptotics into \eqref{eq:231}.

\begin{step}[Exact representation of $y_{n,n',m}(t)$]
  In this step we show that for $m < n\leq n'$
  \begin{equation}\label{eq:451}
  \begin{aligned}
    y_{n,n',m}(t) & = \sum_{k=0}^{n-m}\sum_{k'=0}^{n'-m} a_{k} a_{k'} b_{n-m-k}
    b_{n'-m-k'} \\ &\qquad \qquad \qquad
    \cdot\mathbb{COV}[e^{-c^{-n+k}(t-T_m)}\mathbbm{1}_{t\geq T_m},
    e^{-c^{-n'+k'}(t-T_m)} \mathbbm{1}_{t\geq T_m}] 
  \end{aligned}
  \end{equation}
  and for $m=n\leq n'$
  \begin{equation}   \label{eq:452}
     y_{n,n',m}(t)  = \delta_{n,n'} y_n(t) - y_n(t)  y_{n'}(t),
  \end{equation}
  where $\delta_{n,n'}$ is Kronecker's $\delta$.
  \begin{proof}
    For \eqref{eq:451}, observe that $Y_{0_n}(t)$ and
    $Y_{0_m1_{n'-m}}(t)$ are independent, given $T_{m}$. Moreover, it
    is clear that $\mathbb E[Y_{0_n}(t)|T_m] = y_{n-m}(c^{-m}(t-T_m))
    \mathbbm 1_{t\geq T_m}$. Hence, by \eqref{eq:Th1a},
    \begin{align*}
      &\mathbb{COV}[Y_{0_n}(t), Y_{0_m1_{n'-m}}(t)] \\ & = \mathbb
      E[\mathbb{COV}[Y_{0_n}(t), Y_{0_m1_{n'-m}}(t)| T_{m}]] \\ &
      \qquad \qquad \qquad \qquad \qquad \qquad \qquad + \mathbb{COV}[
      \mathbb E[Y_{0_n}(t)|T_{m}], \mathbb
      E[Y_{0_m1_{n'-m}}(t)|T_{m}]] \\ & = \mathbb{COV}[
      y_{n-m}(c^{-m}(t-T_{m}))\mathbbm 1_{t\geq
        T_m},y_{n'-m}(c^{-m}(t-T_{m}))\mathbbm 1_{t\geq T_m}] \\ & =
      \sum_{k=0}^{n-m} \sum_{k'=0}^{n'-m} a_{k} a_{k'} b_{n-m-k}
      b_{n'-m-k'} \mathbb{COV}[e^{-c^{-n+k}(t-T_m)}\mathbbm 1_{t\geq
        T_m}, e^{-c^{-n'+k'}(t-T_m)}\mathbbm 1_{t\geq T_m}].
    \end{align*}
    For \eqref{eq:452}, note that $m=n$ implies that
    $Y_{0_n}(t)Y_{0_m1_{n'-m}}(t) = \delta_{n,n'}Y_{0_n}(t)$ and the
    result follows.
  \end{proof}
\end{step}

\begin{step}[Asymptotics of $y_{n+i,n+i',m}(tc^n)$ for large $n$]
  The aim of this step is to establish that for $i,i'\in\mathbb Z$ and
  $m\in\mathbb Z_+$,
  \begin{align}\label{eq:453}
    y_{n+i,n+i',m}(tc^n) & \stackrel{n\to\infty}\approx b_\infty^2
    \sum_{k=0}^\infty \sum_{k'=0}^\infty a_{k} a_{k'}
    e^{-t(c^{-i+k}+c^{-i'+k'})} c^{-2n-i-i'+k+k'}
    \frac{c^{2(m+1)}-1}{c^2-1}.
  \end{align}
  \begin{proof}
    For large $n$, we have $tc^n>T_m$ and so, by \eqref{eq:451},
    \eqref{eq:452} and dominated convergence,
    \begin{equation}
      \label{eq:sec2}
      \begin{aligned}
        y_{n+i,n+i',m}(tc^n) & \stackrel{n\to\infty}\approx b_\infty^2
        \sum_{k=0}^\infty \sum_{k'=0}^\infty a_{k}
        a_{k'}\mathbb{COV}[e^{-c^{-n-i+k}(tc^n-T_m)},
        e^{-c^{-n-i'+k'}(tc^n-T_m)}] \\ & = b_\infty^2
        \sum_{k=0}^\infty \sum_{k'=0}^\infty a_{k} a_{k'}
        e^{-t(c^{-i+k}+c^{-i'+k'})}\mathbb{COV}[e^{c^{-n-i+k}T_m},
        e^{c^{-n-i'+k'}T_m}].
      \end{aligned}
    \end{equation}
    Now, for all $j,j'\in\mathbb Z$, using \eqref{eq:lapl1}, for $n$
    large enough,
    \begin{equation}
      \label{eq:sec3}
      \begin{aligned}
        \mathbb{COV}[ & e^{c^{-n-j}T_m},e^{c^{-n-j'}T_m}] \\ & =
        \prod_{l=0}^m \frac{1}{1 - c^{-n-j+l} - c^{-n-j'+l}} -
        \prod_{l=0}^m \frac{1}{1 - c^{-n-j+l} - c^{-n-j'+l} +
          c^{-2n-j-j'+2l}} \\ & \stackrel{n\to\infty}\approx
        \prod_{l=0}^m (1 + c^{-n-j+l} +
        c^{-n-j'+l} + c^{-2n-2j+2l} + 2c^{-2n-j-j'+2l} + c^{-2n-2j'+2l} ) \\
        & \qquad - \prod_{l=0}^m (1 + c^{-n-j+l} + c^{-n-j'+l} +
        c^{-2n-2j+2l} + c^{-2n-j-j'+2l} + c^{-2n-2j'+2l} ) \\ &
        \stackrel{n\to\infty}\approx \sum_{l=0}^m c^{-2n-j-j'+2l} \\ &
        = c^{-2n-j-j'} \frac{c^{2(m+1)}-1}{c^2-1}.
      \end{aligned}
    \end{equation}
    Plugging the last expression into \eqref{eq:sec2} gives
    \eqref{eq:453}.
  \end{proof}
\end{step}

\begin{step}[Combining \eqref{eq:453} and \eqref{eq:231}]
  We write immediately, using $j=i-k$ and $j'=i'-k'$,
  \begin{equation}
    \begin{aligned}
      \mathbb{COV}& [X_{n+i}(tc^n), X_{n+i'}(tc^n)] \\ &
      \stackrel{n\to\infty}\approx b_\infty^2
      \Big(\frac{2}{c}\Big)^{2n+i+i'} \sum_{m=1}^{n+i+1}
      \sum_{k,k'=0}^\infty a_{k} a_{k'} e^{-t(c^{-i+k}+c^{-i'+k'})}
      c^{k+k'} 2^{-m}\frac{c^{2m}-1}{c^2-1}.
    \end{aligned}\label{eq:1}
  \end{equation}
  Noting that $\sum_{k=0}^\infty |a_k| c^k<\infty$, we see that for
  $a_{i,i'}(t)$ given by \eqref{eq:aii}
  \begin{align*}
    \mathbb{COV}& [X_{n+i}(tc^n), X_{n+i'}(tc^n)]
    \stackrel{n\to\infty} \approx a_{i,i'}(t)
    \Big(\frac{2}{c}\Big)^{2n+i+i'}\sum_{m=1}^{n+i+1}
    \frac{2^{-m}(c^{2m}-1)}{c^2-1}\\
    & \stackrel{n\to\infty}\approx a_{i,i'}(t) \cdot \begin{cases}
      \displaystyle \frac{2}{2-c^2}\Big(\frac{2}{c}\Big)^{2n+i+i'}, & c<\sqrt{2} \\
      2^n n \sqrt{2}^{i+i'}, & c=\sqrt{2}, \\
      \displaystyle \frac{c^4}{2(c^2-1)(c^2-2)} 2^{n+i'} c^{i-i'}, &
      c>\sqrt{2}.
    \end{cases}
  \end{align*}
  This finally shows \eqref{eq:Th1b} and finishes the proof of Theorem
  \ref{Th1}.
\end{step}
\hfill\qed

\section{Proof of Theorem \ref{T2}}
\label{sec::proofs2}
The parameter $r$ is only a rescaling of time. Hence, we can safely
assume $r=1$ in our proof.  Let $\mathcal Y = (Y(t))_{t\geq 0}$ be the
Aldous-Shields model with parameter $c$ and $X_n(t)$ as in
\eqref{eq:Xn}. Defining
$$ \widetilde X_{h+1}(t) := \sum_{n=h+1}^\infty 
2^{h+1-n} X_{n}(t),$$ it is important to note that 
$$\widetilde X_{h+1}(t) = \# \{u\in\mathbb T_{h+1}: \; \exists v: u\preceq v, 
Y_v(t)=1\},$$ almost surely; see also Remark \ref{rem:sumXn}. This
implies that we can couple $\mathcal Y$ and $\mathcal Z$ in the sense
that
$$(X_1(t), ..., X_h(t), \widetilde X_{h+1}(t))_{t\geq 0} \stackrel
d = \Big( \sum_{u\in\mathbb T_1} Z_u(t),..., \sum_{u\in\mathbb
  T_{h+1}} Z_u(t)\Big)_{t\geq 0}.$$ By Theorem \ref{Th1}, for
$n\leq h$ and $i=0,1,2,...$
\begin{equation*}
  \label{eq:T21}
  \begin{aligned}
    Z^p(tc^h) & \stackrel d = \sum_{i=0}^h X_i(tc^h) = \sum_{i=0}^h
    X_{h-i}(tc^h) \stackrel{h\to\infty}\approx b_\infty \cdot 2^{h}
    \sum_{i=0}^\infty \sum_{k=0}^\infty 2^{-i} a_k e^{-c^{i+k}t},\\
    Z^s(tc^h) & \stackrel d =\widetilde X_{h+1}(tc^h) =
    \sum_{i=1}^\infty 2^{-i+1} X_{h+i}(tc^h)
    \stackrel{h\to\infty}\approx b_\infty \cdot 2^{h}
    \sum_{i=1}^\infty \sum_{k=0}^\infty 2a_k e^{-c^{-i+k} t}
  \end{aligned}
\end{equation*}
in probability, for all $t>0$. Using the last two limits in the
definition of $L$ in \eqref{eq:L} gives the result.
\hfill\qed

\subsubsection*{Acknowledgements}
This research was supported by the BMBF through FRISYS (Kennzeichen
0313921).


\end{document}